\newcommand{\bC}{{\mathbb{C}}}
\newcommand{\bN}{{\mathbb{N}}}
  \newcommand{\A}{{\mathcal{A}}}
\renewcommand{\H}{{\mathcal{H}}}
  \newcommand{\I}{{\mathcal{I}}}
\renewcommand{\S}{{\mathcal{S}}}
\renewcommand{\phi}{\varphi}
\newcommand{\upchi}{{\raise.35ex\hbox{$\chi$}}}
\newcommand{\spn}{\operatorname{span}}
\newtheorem{lemma}{Lemma}[section]
\newtheorem{theorem}[lemma]{Theorem}
\newtheorem{corollary}[lemma]{Corollary}
\theoremstyle{definition}
\newtheorem{definition}[lemma]{Definition}
\begin{document}

\title{Faithfulness of bi-free product states}
\author{Christopher Ramsey}
%
%
%
%
%
%
%
%
%
%
%
%
%

\address{Department of Mathematics \\ University of Manitoba \\ Winnipeg, Manitoba, Canada}
\email{christopher.ramsey@umanitoba.ca}

\thanks{2010 {\it  Mathematics Subject Classification.} 46L30, 46L54, 46L09.
}
\thanks{{\it Key words and phrases:} Free probability, operator algebras, bi-free}

\begin{abstract}
Given a non-trivial family of pairs of faces of unital C$^*$-algebras where each pair has a faithful state,
it is proved that if the bi-free product state is faithful on the reduced bi-free product of this family of pairs of faces then each pair of faces arises as a minimal tensor product. A partial converse is also obtained.
\end{abstract}

\maketitle

\section{introduction}

The reduced free product was given independently by Avitzour \cite{Avitzour} and Voiculescu \cite{Voic85} and it has been foundational in the development of free probability.
Dykema proved in \cite{Dykema} that the free product state on the reduced free product of unital C$^*$-algebras with faithful states is faithful. 
In consequence of this, if $\{\A_i\}_{i\in\I}$ is a free family of unital C$^*$-algebras in the non-commutative C$^*$-probability space $(\A, \varphi)$ and if $\varphi$ is faithful on $C^*(\{\A_i\}_{i\in\I})$ then 
\[
C^*(\{\A_i\}_{i\in\I}) \simeq *_{i\in \I} (\A_i, \varphi|_{\A_i}),
\]
the reduced free product of the $\A_i$'s with respect to the given states.
This can be deduced from a paper of Dykema and R{\o}rdam, namely \cite[Lemma 1.3]{DykemaRordam}.

The present paper is the result of the author's attempt to prove the same result in the new context of bi-free probability introduced by Voiculescu \cite{Voic14}. To this end, suppose $(\A_l^{(i)}, \A_r^{(i)})_{i\in\I}$ is a non-trivial family of pairs of faces  in the non-commutative C$^*$-probability space $(\A,\varphi)$. 
If $\varphi_i = \varphi|_{C^*(\A_l^{(i)}, \A_r^{(i)})}$ is faithful on $C^*(\A_l^{(i)}, \A_r^{(i)})$, for all $i\in \I$, then 
it will be proven that if the bi-free product state ${**}_{i\in\I} \varphi_i$ is faithful on the reduced bi-free product ${**}_{i\in \I} (\A_l^{(i)}, \A_r^{(i)})$ then $C^*(\A_l^{(i)}, \A_r^{(i)}) \simeq \A_l^{(i)} \otimes_{min} \A_r^{(i)}, i\in \I$. A converse is shown with the added asumption that each $\varphi_i$ is a product state. Moreover, in this case there is a commensurate result to that which follows from Dykema and R{\o}rdam, mentioned above.

It should be mentioned that the failure in general of the faithfulness of the bi-free product state has been pointed out in \cite{FreslonWeber} and this failure has been the cause of the introduction of weaker versions of faithfulness in the bi-free context \cite{FreslonWeber, Skoufranis}.
\vskip 6 pt
\noindent {\bf Acknowledgements:} The author would like to thank Scott Atkinson for sparking my interest into bi-free independence and for suggesting the reduced bi-free product, Paul Skoufranis for pointing out an error in a previous version of this paper, and the referee for their help in improving several difficult passages.

\section{Bi-free independence and the reduced bi-free product}

We will first take some time to recall the definition of bi-free independence from \cite{Voic14} and then define the reduced bi-free product of C$^*$-algebras and the bi-free product state.

Fix a non-commutative C$^*$-probability space $(\A, \varphi)$, that is a unital C$^*$-algebra and a state. Given a set $\I$, suppose that for each $i\in\I$ there is a pair of unital C$^*$-subalgebras $\A_l^{(i)}$ and $\A_r^{(i)}$ of $\A$, a ``left'' algebra and a ``right'' algebra. We call the set $(\A_l^{(i)}, \A_r^{(i)})_{i\in\I}$ a family of pairs of faces in $\A$. Such a family will be called {\em non-trivial} if $|\I| \geq 2$ and $C^*(\A_l^{(i)}, \A_r^{(i)}) \neq \bC$ for all $i\in \I$. That is, there are at least two pairs of faces and there are no trivial pairs of faces. 

 Let $(\pi_i, \H_i, \xi_i)$ be the GNS construction for $(C^*(\A_l^{(i)}, \A_r^{(i)}), \varphi_i)$ where $\varphi_i = \varphi|_{C^*(\A_l^{(i)}, \A_r^{(i)})}$. Voiculescu \cite{Voic14} (and even way back in \cite{Voic85}) observed that there are two natural representations of $B(\H_i)$ on the free product Hilbert space, which we will now introduce.
The free product Hilbert space,
\[
(\H, \xi) = *_{i\in \I} (\H_i, \xi_i),
\]
is given by associating all of the distinguished vectors and then forming a Fock space like structure. Namely, if $\mathring \H_j = \H_j \ominus \bC\xi_j$, then
\[
\H  := \bC\xi \oplus \bigoplus_{\begin{smallmatrix} n\in \bN \\ i_1,\cdots, i_n\in \I \\ i_1 \neq \cdots \neq i_n \end{smallmatrix}} \mathring \H_{i_1}\otimes \cdots \otimes \mathring \H_{i_n}. 
\]
To define these representations we need to first build some Hilbert spaces and some unitaries.
To this end, define 
\begin{align*}
\quad\quad \H(l,i) & := \bC\xi \oplus \bigoplus_{\begin{smallmatrix} n\in \bN \\ i_1,\cdots, i_n\in \I \\i\neq i_1 \neq \cdots \neq i_n \end{smallmatrix}} \mathring \H_{i_1}\otimes \cdots \otimes \mathring \H_{i_n} 
\quad \quad \textrm{and}
\\ \H(r,i) & := \bC\xi \oplus \bigoplus_{\begin{smallmatrix} n\in \bN \\ i_1,\cdots, i_n\in \I \\i_1 \neq \cdots \neq i_n \neq i \end{smallmatrix}} \mathring \H_{i_1}\otimes \cdots \otimes \mathring \H_{i_n}.
\end{align*}
Then there are unitaries $V_i : \H_i \otimes \H(l,i) \rightarrow \H$ and $W_i : \H(r,i) \otimes \H_i$ given by concatenation (with appropriate handling of $\xi_i$ and $\xi$).
Finally, the two natural representations are the left representation $\lambda_i : B(\mathcal H_i) \rightarrow B(\H)$ which is defined as 
\[
\lambda_i(T) = V_i(T \otimes I_{\H(l,i)})V_i^*
\]
and the right representation $\rho_i : B(\mathcal H_i) \rightarrow B(\H)$ which is defined as
\[
 \rho_i(T) = W_i(I_{\H(r,i)} \otimes T)W_i^*.
\]

With all of this groundwork established we can finally define bi-free independence. Note that $\check *$ below refers to the full (or universal) free product of C$^*$-algebras.

\begin{definition}[Voiculescu \cite{Voic14}]
The family of pairs of faces $(\A_l^{(i)}, \A_r^{(i)})_{i\in\I}$ in the non-commutative probability space $(\A, \varphi)$ is said to be {\em bi-freely independent} with respect to $\varphi$ if the following diagram commutes
\[
\begin{CD}
\check*_{i\in \I} (\A_l^{(i)} \check * \A_r^{(i)}) @>\iota>> \A @>\varphi>> \bC \\
@V*_{i\in \I} (\pi_i * \pi_i)VV @. @|\\
\check *_{i\in\I} (B(\H_i) \check * B(\H_i)) @>*_{i\in I} (\lambda_{i} * \rho_{i})>> B(\H) @>\langle \cdot \xi,\xi\rangle >> \bC
\end{CD}
\]
where $\iota$ is the unique $*$-homomorphism extending the identity on each $\A_\chi^{(i)}$, for all $\chi\in \{l,r\}$ and $i\in\I$.
\end{definition}

From this we can now define the main objects of this paper. 

\begin{definition}
Let $(\A_l^{(i)}, \A_r^{(i)})_{i\in \I}$ be a family of pairs of faces in the non-commutative C$^*$-probability space $(\A, \varphi)$. As before, denote $\varphi_i$ to be the restriction of $\varphi$ to $C^*(\A_l^{(i)}, \A_r^{(i)})$ and let $(\pi_i, \H_i, \xi_i)$ be the GNS construction of $(C^*(\A_l^{(i)}, \A_r^{(i)}), \varphi_i)$.

The {\em reduced bi-free product} of $(\A_l^{(i)}, \A_r^{(i)})_{i\in\I}$ with respect to the states $\varphi_i$ is 
\[
({**}_{i\in\I} (\A_l^{(i)}, \A_r^{(i)}), {**}_{i\in\I} \varphi_i) = {**}_{i\in\I} ((\A_l^{(i)}, \A_r^{(i)}), \varphi_i)
\]
which is made up of the unital C$^*$-subalgebra of $B(\H)$, called the {\em reduced bi-free product of C$^*$-algebras},
\[
{**}_{i\in\I} (\A_l^{(i)}, \A_r^{(i)}) := C^*((\lambda_i\circ\pi_i(\A_l^{(i)}), \rho_i\circ\pi_i(\A_r^{(i)}))_{i\in\I}) \subset B(\H)
\]
and the {\em bi-free product state}
\[
{**}_{i\in\I} \varphi_i(\cdot) := \langle \cdot \xi,\xi\rangle.
\]
\end{definition}

It is an immediate fact that the family of pairs of faces $(\lambda_i\circ\pi_i(\A_l^{(i)}), \rho_i\circ\pi_i(\A_r^{(i)}))_{i\in\I}$ is bi-freely independent with respect to the bi-free product state.

It should be noted that we are working within the framework of the original non-commutative C$^*$-probability space $(\A,\varphi)$. This means that the reduced bi-free product is taking into account the behaviour of $\varphi$ not just on the left and right faces but on the C$^*$-algebra they generate, $C^*(\A_l^{(i)}, \A_r^{(i)})$. Since bi-free independence is a statement about the behaviour in the original C$^*$-probability space this definition makes sense.

That being said, one can create the reduced bi-free product as an external product.
Start with pairs of faces in different C$^*$-probability spaces and simply create a new C$^*$-probability space by taking the full free product of the C$^*$-algebras and their associated states and then proceed with the above reduced bi-free product construction.

\section{Faithfulness of bi-free product states}

We first establish what happens when the bi-free product state is faithful.

\begin{theorem}\label{T:mainresult}
Let $(\A_l^{(i)}, \A_r^{(i)})_{i\in \I}$ be a non-trivial family of pairs of faces in the non-commutative $C^*$-probability space $(\A, \varphi)$ such that $\varphi_i = \varphi|_{C^*(\A_l^{(i)}, \A_r^{(i)})}$ is faithful on $C^*(\A_l^{(i)}, \A_r^{(i)})$ for each $i\in \I$. 
If ${**}_{i\in \I} \varphi_i$ is faithful on the reduced bi-free product ${**}_{i\in \I} (\A_l^{(i)}, \A_r^{(i)})$ then
\[
C^*(\A_l^{(i)}, \A_r^{(i)}) \simeq \A_l^{(i)} \otimes_{min} \A_r^{(i)}.
\]
\end{theorem}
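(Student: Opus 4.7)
The strategy has two parts: first show that the faces $\A_l^{(i)}$ and $\A_r^{(i)}$ commute inside $C^*(\A_l^{(i)}, \A_r^{(i)})$, then upgrade the resulting surjection to an isomorphism with $\A_l^{(i)} \otimes_{min} \A_r^{(i)}$. The commutation is the heart of the argument; the min-tensor identification is where I expect the main technical difficulty.

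For the commutation step, fix $a \in \A_l^{(i)}$, $b \in \A_r^{(i)}$ and consider
\[
c := \lambda_i\pi_i(a)\rho_i\pi_i(b) - \rho_i\pi_i(b)\lambda_i\pi_i(a) \in {**}_{j\in\I}(\A_l^{(j)}, \A_r^{(j)}).
\]
Using the Fock-like structure of $\H$ and the definitions of $\lambda_i, \rho_i$ via the unitaries $V_i, W_i$, I would check that under the canonical embedding $\H_i \hookrightarrow \H$ (identifying $\xi_i$ with $\xi$) the operator $c$ acts on $\H_i$ as $\pi_i([a,b])$ and vanishes on $\H \ominus \H_i$. The off-$\H_i$ vanishing is the decisive calculation: on any simple tensor whose first or last factor lies outside $\H_i$, the ``left'' and ``right'' modifications carried out by $\lambda_i$ and $\rho_i$ act on disjoint ends of the word and therefore commute. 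Now suppose for contradiction that $[a,b] \neq 0$; faithfulness of $\varphi_i$ forces $c\xi = \pi_i([a,b])\xi_i \neq 0$, so $c \neq 0$. By non-triviality $C^*(\A_l^{(k)}, \A_r^{(k)}) \neq \bC$ for some $k \neq i$, so at least one face at index $k$ is non-trivial; WLOG pick a non-zero $d \in \A_l^{(k)}$ with $\varphi_k(d) = 0$ (center a non-scalar element). Set $X := c \cdot \lambda_k\pi_k(d)$. Since $\lambda_k\pi_k(d)\xi = \pi_k(d)\xi_k$ lies in $\mathring\H_k \subset \H \ominus \H_i$, the vanishing of $c$ off $\H_i$ gives $X\xi = 0$. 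On the other hand, putting $\eta := \pi_k(d^*)\xi_k \in \mathring\H_k$ (non-zero by faithfulness of $\varphi_k$) one has $\lambda_k\pi_k(d)\eta = \pi_k(dd^*)\xi_k$, whose $\xi_k$-component equals $\varphi_k(dd^*) > 0$; hence $X\eta = \varphi_k(dd^*)\pi_i([a,b])\xi_i \neq 0$. Thus $X$ is a non-zero operator in the bi-free product annihilating $\xi$, contradicting faithfulness of ${**}_j\varphi_j$. Therefore $[a,b] = 0$.

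Commutation then yields a surjection $\alpha\colon \A_l^{(i)} \otimes_{max} \A_r^{(i)} \twoheadrightarrow C^*(\A_l^{(i)}, \A_r^{(i)})$, $a \otimes b \mapsto ab$, and the remaining task is to show that $\ker\alpha$ coincides with the kernel of the canonical max-to-min quotient. My plan is to exploit the structure on $\H$: once the faces commute, $\H_i$ is invariant under $C^*(\lambda_i\pi_i(\A_l^{(i)}), \rho_i\pi_i(\A_r^{(i)}))$, so the compression $E(X) := P_{\H_i} X P_{\H_i}$ is a $*$-homomorphism, and the identity $\varphi_i \circ E = {**}_j\varphi_j$ on the subalgebra combined with faithfulness forces $E$ to be an isomorphism onto $\pi_i(C^*(\A_l^{(i)}, \A_r^{(i)}))$. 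Under $V_i\colon \H_i \otimes \H(l,i) \to \H$ the left generators take the spatial form $\lambda_i\pi_i(a) = \pi_i(a) \otimes I$, and commutation places $\rho_i\pi_i(\A_r^{(i)})$ inside the commutant $(\pi_i(\A_l^{(i)}) \otimes I)'$. Pushing this partially spatial picture through faithful tensor representations of $\A_l^{(i)}$ and $\A_r^{(i)}$ should identify the quotient norm with the minimal cross-norm; if the direct spatial argument proves too rigid, the fallback is an injectivity argument modelled on \cite[Lemma~1.3]{DykemaRordam}, exhibiting a faithful representation of $\A_l^{(i)} \otimes_{min} \A_r^{(i)}$ through which $\alpha$ factors.
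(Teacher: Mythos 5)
Your commutation step is correct and is essentially the paper's argument: the commutator $c=[\lambda_i\pi_i(a),\rho_i\pi_i(b)]$ kills $\H\ominus\H_i$ and acts as $\pi_i([a,b])$ on $\H_i$, and multiplying on the right by a centred, non-zero $d$ from another pair of faces produces, if $[a,b]\neq 0$, a non-zero element of the reduced bi-free product annihilating $\xi$. (The paper phrases the same computation as ``the operator is zero, now apply it to $\pi_k(d^*)\xi_k$ and compress to $\H_i$''.) Your observation that compression to $\H_i$ is an injective $*$-homomorphism is also fine, but it only re-identifies $C^*(\lambda_i\pi_i(\A_l^{(i)}),\rho_i\pi_i(\A_r^{(i)}))$ with $C^*(\A_l^{(i)},\A_r^{(i)})$; it says nothing about tensor norms.

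The second half has a genuine gap, and it is located exactly where you flagged the ``main technical difficulty.'' Knowing that $\lambda_i\pi_i(\A_l^{(i)})=\pi_i(\A_l^{(i)})\otimes I$ on $\H_i\otimes\H(l,i)$ and that $\rho_i\pi_i(\A_r^{(i)})$ sits in its commutant only exhibits the algebra as a quotient of $\A_l^{(i)}\otimes_{max}\A_r^{(i)}$: a C$^*$-algebra generated by $M\otimes I$ together with a subalgebra of its commutant is in general far from the minimal tensor product, and deciding when $C^*(A,B)\simeq A\otimes_{min}B$ for commuting subalgebras is precisely the hard content of the theorem. So the ``direct spatial argument'' cannot close, and the ``fallback'' is a restatement of the goal, not a construction. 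The missing device is the isometry $V_h:\H_i\otimes\H_i\to\H_i\otimes h\otimes\H_i\subseteq\H$ for a unit vector $h\in\mathring\H_j$ with $j\neq i$ --- this is where non-triviality re-enters, and your second half never uses it. The interposed letter $h$ insulates the left action from the right action, so $\H_i\otimes h\otimes\H_i$ is reducing and the compression of the algebra to it is literally the spatial, hence minimal, tensor product. From this one gets, first, injectivity of $\A_l^{(i)}\odot\A_r^{(i)}\to C^*(\A_l^{(i)},\A_r^{(i)})$ and hence (via Takesaki) a surjection $q:C^*(\A_l^{(i)},\A_r^{(i)})\to\A_l^{(i)}\otimes_{min}\A_r^{(i)}$; and second, by applying faithfulness of ${**}_{i\in\I}\varphi_i$ once more to the operator $\lambda_i*\rho_i(\tilde a)\,\lambda_j*\rho_j(b)$ with $h=b\xi_j$, injectivity of $q$. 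Both steps are absent from your sketch and neither follows from commutation alone.
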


\begin{proof}
First we will establish that $\A_l^{(i)}$ and $\A_r^{(i)}$ commute in $\A$, then we will show that they induce a C$^*$-norm on the algebraic tensor product $\A_l^{(i)} \odot \A_r^{(i)}$ and finally that this is in fact the minimal tensor norm.

We will be using the notation from Section 2. To simplify things a little bit, because the $\varphi_i$ are assumed to be faithful, consider $C^*(\A_l^{(i)}, \A_r^{(i)})$ as already a subalgebra of $B(\H_i)$ and so $\varphi_i(\cdot) = \langle \cdot \xi_i,\xi_i\rangle$. That is, we are suppressing the $\pi_i$ notation from the GNS construction. Moreover, we will be using the convention that $\lambda_i(x), \rho_i(x), \lambda_i*\rho_i(x)$ all are living in $B(\H)$.

Suppose $a_\chi \in \A_\chi^{(i)}$ such that $\varphi_i(a_\chi) = 0$, $\chi \in \{l,r\}$ and $0\neq b\in \A_l^{(j)} \cup \A_r^{(j)}$ for $j\neq i$ such that $\varphi_j(b) = 0$. Such a $b$ exists by the non-triviality of the family of pairs of faces.
This gives that $\langle b^*\xi_j,\xi_j\rangle =\varphi_j(b)=0$ and so $b^*\xi_j\in \mathring \H_j$ while $\langle b(b^*\xi_j), \xi_j\rangle = \varphi_j(bb^*) \neq 0$ by the faithfulness of $\varphi_j$.


Now, \cite[Section 1.5]{Voic14} establishes that $[\lambda_i(\A_l^{(i)}), \rho_i(\A_r^{(i)})](\H \ominus \H_i) = 0$
which gives that
\[
(\lambda_i(a_l)\rho_i(a_r)\lambda_j * \rho_j(b) - \rho_i(a_r)\lambda_i(a_l)\lambda_j * \rho_j(b))\xi = 0
\]
 since $b\xi \in \mathring \H_j \subset \H$.
The faithfulness of ${**}_{i\in \I} \varphi_i$ implies that $\xi$ is a separating vector for the reduced bi-free product and thus
\[
\lambda_i(a_l)\rho_i(a_r)\lambda_j * \rho_j(b) - \rho_i(a_r)\lambda_i(a_l)\lambda_j * \rho_j(b) = 0
\] which gives that
\begin{align*}
0 &= P_{\H_i}(\lambda_i(a_l)\rho_i(a_r)\lambda_j * \rho_j(b) - \rho_i(a_r)\lambda_i(a_l)\lambda_j * \rho_j(b))b^*\xi_j
\\& = (\lambda_i(a_l)\rho_i(a_r) - \rho_i(a_r)\lambda_i(a_l))\langle bb^*\xi_j, \xi_j\rangle \xi
\\ & = \langle bb^*\xi_j, \xi_j\rangle(a_la_r - a_ra_l)\xi_i. 
\end{align*}
Since $\xi_i$ is separating for $C^*(\A_l^{(i)}, \A_r^{(i)})$ this implies that $a_l$ and $a_r$ commute. Thus, $\A_l^{(i)}$ and $\A_r^{(i)}$ commute in $\A$ for every $i\in \I$.

\vskip 6 pt
\noindent \textit{Claim:} The canonical map from $\A_l^{(i)} \odot \A_r^{(i)}$ to $C^*(\A_l^{(i)}, \A_r^{(i)})$ is injective. 
\vskip 6 pt
Since $\A_l^{(i)}$ and $\A_r^{(i)}$ commute, the universal property of $\A_l^{(i)} \odot \A_r^{(i)}$ gives that there exists a $*$-homomorphism
\[
\sum_{k=1}^m a_{k,l}\odot a_{k,r} \mapsto \sum_{k=1}^m a_{k,l}a_{k,r}.
\]
We need to establish its injectivity.
 To this end, consider $h\in \mathring \H_j$, $\|h\|=1$ where $j\neq i$ and the isometric map 
\[
V_h : \H_i \otimes \H_i \rightarrow \H_i \otimes h \otimes \H_i
\]
defined by $V_h(h_l \otimes h_r) = h_l \otimes h \otimes h_r$ for $h_l, h_r\in \H_i$. This map is inspired by Dykema's proof of the faithfulness of the free product state \cite[Theorem 1.1]{Dykema}.
Note that in $\H$ we really have that
\[
 \H_i \otimes h \otimes \H_i = \bC h \oplus (\mathring \H_i \otimes h) \oplus (h \otimes \mathring \H_i) \oplus (\mathring \H_i \otimes h \otimes \mathring \H_i)
\]
but hopefully the reader will pardon the simplified notation.

Now $\H_i \otimes h \otimes \H_i$ is a reducing subspace of $C^*(\lambda_i(\A_l^{(i)}), \rho_i(\A_r^{(i)}))$ since for all $a\in \A_l^{(i)}, b\in \A_r^{(i)}$ and $\eta_1, \eta_2 \in \H_i$ we have that
\begin{align*}
V_h^*\lambda_i(a)\rho_i(b)V_h(\eta_1\otimes \eta_2) & = V_h^*\lambda_i(a)\rho_i(b)(\eta_1\otimes h\otimes \eta_2)
\\ & = a\eta_1 \otimes b\eta_2.
\end{align*}
Thus, compressing to $\H_i \otimes h\otimes \H_i$ gives
\[
V_h^*C^*(\lambda_i(\A_l^{(i)}), \rho_i(\A_r^{(i)}))V_h = \A_l^{(i)} \otimes_{min}\A_r^{(i)}.
\]
So, if $\sum_{k=1}^m a_{k,l}\odot a_{k,r} \neq 0 \in \A_l^{(i)} \odot \A_r^{(i)}$ then $\sum_{k=1}^m a_{k,l}\otimes a_{k,r} \neq 0 \in \A_l^{(i)} \otimes_{min}\A_r^{(i)}$ which implies that 
\begin{align*}
0 & \neq \sum_{k=1}^m a_{k,l}\otimes a_{k,r}(\xi_i \otimes \xi_i)
\\ & = V_h^*\sum_{k=1}^m \lambda_i(a_{k,l})\rho_i(a_{k,r})V_h(\xi_i \otimes \xi_i)
\\ & = \sum_{k=1}^m \lambda_i(a_{k,l})\rho_i(a_{k,r})h
\end{align*}
since the state $\langle \cdot \xi_i\otimes \xi_i, \xi_i\otimes \xi_i\rangle$ is faithful on the min tensor product.
But then $\sum_{k=1}^m \lambda_i(a_{k,l})\rho_i(a_{k,r}) \neq 0 \in C^*(\lambda_i(\A_l^{(i)}), \rho_i(\A_r^{(i)}))$ which by the faithfulness of ${**}_{i\in \I} \varphi_i$ gives that $\sum_{k=1}^m \lambda_i(a_{k,l})\rho_i(a_{k,r})\xi \neq 0$. Finally,
\begin{align*}
0 & \neq \left\langle \sum_{k=1}^m \lambda_i(a_{k,l})\rho_i(a_{k,r})\xi,\sum_{k=1}^m \lambda_i(a_{k,l})\rho_i(a_{k,r})\xi\right\rangle
\\ & = \left\langle \left(\sum_{k=1}^m \lambda_i(a_{k,l})\rho_i(a_{k,r})\right)^*\left(\sum_{k=1}^m \lambda_i(a_{k,l})\rho_i(a_{k,r})\right)\xi, \xi\right\rangle
\\ & = \varphi_i\left(\left(\sum_{k=1}^m a_{k,l}a_{k,r}\right)^*\left(\sum_{k=1}^m a_{k,l}a_{k,r}\right)\right)
\end{align*}
which gives by the faithfulness of $\varphi_i$ that $\sum_{k=1}^m a_{k,l}a_{k,r} \neq 0$. Therefore, the claim is verified.

\vskip 6 pt
Now, this implies that $C^*(\A_l^{(i)}, \A_r^{(i)}) \simeq \A_l^{(i)} \otimes_\alpha \A_r^{(i)}$ where $\|\cdot \|_\alpha$ is a C$^*$-norm on $\A_l^{(i)} \odot \A_r^{(i)}$. So by Takesaki's Theorem \cite{Takesaki} we have that there exists a surjective $*$-homomorphism 
\[
q: C^*(\A_l^{(i)}, \A_r^{(i)}) \rightarrow \A_l^{(i)} \otimes_{min} \A_r^{(i)}.
\]
To finish the proof all we need to do is show that $q$ is injective. 

To this end, let $a\in C^*(\A_l^{(i)}, \A_r^{(i)})$ such that $q(a) = 0$. Again as in the first part of this proof, find $0\neq b \in \A_l^{(j)} \cup \A_r^{(j)}$ for $j\neq i$ such that $\varphi_j(b)=0$ and $h\in \mathring \H_j$ such that $\langle bh, \xi_j\rangle \neq 0$. Additionally, assume that $\|b\xi_j\| = 1$.

In the second part of this proof we saw that compressing to $\H_i \otimes b\xi_j \otimes \H_i$ is tantamount to this quotient homomorphism $q$. Namely, suppose 
\[
\iota_i : \A_l^{(i)} \check * \A_r^{(i)} \rightarrow C^*(\A_l^{(i)}, \A_r^{(i)})\  \ (\subseteq B(\H_i) \ \textrm{by assumption})
\] 
is the unique $*$-homomorphism extending the identity in each component.
There then exists $\tilde a \in \A_l^{(i)} \check * \A_r^{(i)}$ such that $\iota_i(\tilde a) = a$. An important fact to record is that, by uniqueness, 
\[
\lambda_i * \rho_i(\cdot)|_{\H_i} = \iota_i(\cdot),
\]
remembering that we have that $\lambda_i * \rho_i(\cdot) \in B(\H)$.
Thus,
\[
V_{b\xi_j}^* \lambda_i * \rho_i(\tilde a) V_{b\xi_j} = q(a) = 0,
\]
which implies, by the fact that $V_{b\xi_j}(\H_i \otimes \H_i)$ is reducing for $\lambda_i * \rho_i(\A_l^{(i)} \check * \A_r^{(i)})$, that 
\begin{align*}
0 &= \lambda_i * \rho_i(\tilde a) V_{b\xi_j}(\xi_i \otimes \xi_i) 
\\ & = \lambda_i * \rho_i(\tilde a)(b\xi_j)
\\ & = \lambda_i * \rho_i(\tilde a)\lambda_j * \rho_j(b)\xi.
\end{align*}
By the faithfulness of the bi-free product state $\lambda_i * \rho_i(\tilde a)\lambda_j * \rho_j(b) = 0$ and so
\begin{align*}
0 & = P_{\H_i}\lambda_i * \rho_i(\tilde a)\lambda_j * \rho_j(b)h
\\ & = \lambda_i * \rho_i(\tilde a)\langle bh, \xi_j\rangle \xi
\\ & = \langle bh,\xi_j\rangle \iota_i(\tilde a)\xi
\\ & = \langle bh, \xi_j\rangle a\xi_i.
\end{align*}
Hence, by the faithfulness of $\varphi_i$ we have that $a=0$. Therefore, for all $i\in \I$, $C^*(\A_l^{(i)}, \A_r^{(i)}) \simeq \A_l^{(i)} \otimes_{min} \A_r^{(i)}$.
\end{proof}

We turn now to a partial converse of the previous theorem. This is probably known among the experts in bi-free probability but we could not find a published proof. The following proof may be a tad clunky but we find it the clearest from a non-expert perspective.

\begin{theorem}\label{T:bifreefaithful}
Let $(\A_l^{(i)}, \A_r^{(i)})_{i\in \I}$ be a family of pairs of faces in the non-commutative C$^*$-probability space $(\A, \varphi)$. If $C^*(\A_l^{(i)}, \A_r^{(i)}) \simeq \A_l^{(i)} \otimes_{min} \A_r^{(i)}$ and $\varphi_i = \varphi_i|_{\A_l^{(i)}} \otimes \varphi_i|_{\A_r^{(i)}}$ is a faithful product state on $C^*(\A_l^{(i)}, \A_r^{(i)})$, for all $i\in \I,$ then ${**}_{i\in\I} \varphi_i$ is faithful on the reduced bi-free product and
\[
{**}_{i\in \I} (\A_l^{(i)}, \A_r^{(i)})_{i\in \I} \ \simeq \ *_{i\in \I} (\A_l^{(i)}, \varphi)  \otimes_{min} *_{i\in \I} (\A_r^{(i)}, \varphi).
\]
\end{theorem}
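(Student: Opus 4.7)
The plan is to identify the reduced bi-free product with the minimal tensor product $\M_l\otimes_{min}\M_r$ of the reduced free products of the left and right marginals; once this identification is established, faithfulness of ${**}_{i\in\I}\varphi_i$ follows because each free product vacuum state is faithful by Dykema \cite{Dykema} and the minimal tensor product of two faithful states is faithful.

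\textbf{Setup.} Because $\varphi_i=\varphi_i|_{\A_l^{(i)}}\otimes\varphi_i|_{\A_r^{(i)}}$ is a faithful product state on $\A_l^{(i)}\otimes_{min}\A_r^{(i)}$, the GNS data of $(C^*(\A_l^{(i)},\A_r^{(i)}),\varphi_i)$ factors as $(\H_i,\xi_i)=(\H_{l,i}\otimes\H_{r,i},\ \xi_{l,i}\otimes\xi_{r,i})$, with $\A_l^{(i)}$ acting as $a\mapsto a\otimes 1$ and $\A_r^{(i)}$ as $b\mapsto 1\otimes b$; here $(\H_{l,i},\xi_{l,i})$ and $(\H_{r,i},\xi_{r,i})$ are the GNS data of the marginals. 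Form the reduced free product Hilbert spaces $(\H_l,\xi_l):=*_{i\in\I}(\H_{l,i},\xi_{l,i})$ and $(\H_r,\xi_r):=*_{i\in\I}(\H_{r,i},\xi_{r,i})$, on which the reduced free products $\M_l:=*_{i\in\I}(\A_l^{(i)},\varphi_i|_{\A_l^{(i)}})$ and $\M_r:=*_{i\in\I}(\A_r^{(i)},\varphi_i|_{\A_r^{(i)}})$ act with faithful vacuum states $\psi_l,\psi_r$. Put $\B:=\M_l\otimes_{min}\M_r$ and $\psi:=\psi_l\otimes\psi_r$; this is faithful.

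\textbf{Main claim: bi-free independence in $\B$.} Let $L_i:\A_l^{(i)}\hookrightarrow\M_l$ and $R_i:\A_r^{(i)}\hookrightarrow\M_r$ be the canonical inclusions and set $\widetilde\A_l^{(i)}:=L_i(\A_l^{(i)})\otimes 1$, $\widetilde\A_r^{(i)}:=1\otimes R_i(\A_r^{(i)})$ in $\B$. For each $i$ the pair commutes and generates a copy of $\A_l^{(i)}\otimes_{min}\A_r^{(i)}$ with joint distribution $\varphi_i$ under $\psi$. I claim that $(\widetilde\A_l^{(i)},\widetilde\A_r^{(i)})_{i\in\I}$ is bi-freely independent with respect to $\psi$. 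Since $\widetilde\A_l^{(i)}$ and $\widetilde\A_r^{(j)}$ live in different tensor factors of $\B$ they commute for \emph{all} $i,j$, so any alternating product $x_1\cdots x_n$ of generators rearranges to $w_l\cdot w_r$, with $w_l$ a word in the left generators and $w_r$ a word in the right, giving
\[
\psi(x_1\cdots x_n)=\psi_l(w_l)\,\psi_r(w_r),
\]
a quantity completely determined by free independence of $\{L_i(\A_l^{(i)})\}$ in $(\M_l,\psi_l)$ and of $\{R_i(\A_r^{(i)})\}$ in $(\M_r,\psi_r)$. On the canonical bi-free Hilbert space $\H=*_i(\H_{l,i}\otimes\H_{r,i})$, the analogous vacuum expectation $\langle\prod_k(\lambda_{i_k}*\rho_{i_k})(\pi_{i_k}(x_k))\,\xi,\xi\rangle$ admits the same factorization: because each $\H_i$ splits as $\H_{l,i}\otimes\H_{r,i}$, the operator $\lambda_i(a\otimes 1)$ only alters the left coordinate of the $\H_i$-factor it touches and $\rho_i(1\otimes b)$ only the right one, so an induction on $n$ lets one track left and right contributions in separate tensor slots throughout the alternating recursion. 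Pairing against $\xi=\xi_l\otimes\xi_r$ at the end yields again $\psi_l(w_l)\psi_r(w_r)$, verifying the bi-free independence diagram.

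\textbf{Conclusion.} Given the main claim, $(\widetilde\A_l^{(i)},\widetilde\A_r^{(i)})_{i\in\I}$ in $(\B,\psi)$ and the canonical bi-free pairs in $({**}_{i\in\I}(\A_l^{(i)},\A_r^{(i)}),{**}_{i\in\I}\varphi_i)$ have identical marginal C$^*$-algebras and identical joint moments. Since $\xi_l\otimes\xi_r$ is cyclic in $\B$ (the generators already span dense $*$-subalgebras of $\M_l$ and $\M_r$) and $\xi$ is cyclic by construction in the reduced bi-free product, the standard GNS uniqueness argument produces a state-preserving $*$-isomorphism
\[
\Phi:{**}_{i\in\I}(\A_l^{(i)},\A_r^{(i)})\ \xrightarrow{\ \simeq\ }\ \M_l\otimes_{min}\M_r
\]
sending $\lambda_i\circ\pi_i(a)\mapsto L_i(a)\otimes 1$ and $\rho_i\circ\pi_i(b)\mapsto 1\otimes R_i(b)$. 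Pulling the faithful state $\psi$ back through $\Phi$ yields the faithfulness of ${**}_{i\in\I}\varphi_i$ and the stated isomorphism simultaneously. The main obstacle is the moment comparison on $\H$ inside the main claim: the bi-free recursion for $\lambda_i*\rho_i$ a priori intertwines the left and right actions on each $\H_i$, so one has to verify carefully that the tensor decomposition on each factor survives the alternating insertions in such a way that the global expectation still factors as $\psi_l(w_l)\psi_r(w_r)$.
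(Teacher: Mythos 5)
Your setup and your moment computation are essentially right, and they match the first half of the paper's own proof: the factorization of each $\H_i$ as $\H_{i,l}\otimes\H_{i,r}$, the identification of the compressed action on the subspace generated from $\xi$ with $\M_l\otimes_{min}\M_r$, and the resulting identity $\langle x_1\cdots x_n\,\xi,\xi\rangle=\psi_l(w_l)\psi_r(w_r)$ all hold (your ``main obstacle'' is real but surmountable, exactly as you suspect). The genuine gap is in the final step: the assertion that ``$\xi$ is cyclic by construction in the reduced bi-free product'' is false in general. Left operators $\lambda_i\circ\pi_i(a)$ act at the left end of a tensor in $\H$ and right operators $\rho_i\circ\pi_i(b)$ at the right end, so every vector in $\overline{\C\xi}$ (writing $\C$ for the reduced bi-free product) lies in the closed span of tensors in which all pure-left factors precede all pure-right factors, i.e.\ in $S_\xi(\H_l\otimes\H_r)$ in the paper's notation. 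A vector such as $(\xi_{j_1,l}\otimes h_{j_1,r})\otimes(h_{j_2,l}\otimes\xi_{j_2,r})$ with $j_1\neq j_2$, $h_{j_1,r}\in\mathring\H_{j_1,r}$, $h_{j_2,l}\in\mathring\H_{j_2,l}$ --- a nontrivial right component sitting to the \emph{left} of a nontrivial left component --- is orthogonal to that subspace, so $\xi$ fails to be cyclic as soon as at least two pairs of faces have nontrivial left and right parts.

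Consequently your GNS-uniqueness step only produces the compression $a\mapsto S_\xi^*aS_\xi$, a surjective $*$-homomorphism from $\C$ onto $\M_l\otimes_{min}\M_r$; its injectivity is precisely the faithfulness you are trying to prove, so the argument becomes circular at that point. From ${**}_{i\in\I}\varphi_i(a^*a)=0$ and faithfulness of $\psi$ you may conclude $\Phi(a)=0$, but not $a=0$. The paper closes exactly this gap by introducing the family $\S$ of ``middle'' vectors $h$, proving that each $S_h(\H_l\otimes\H_r)$ is a reducing subspace, that these subspaces jointly span $\H$, and that $S_h^*aS_h=S_\xi^*aS_\xi$ for every $h\in\S$; only then does $a\xi=0$ force $a=0$ on all of $\H$. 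Some version of this covering argument is the missing ingredient in your proposal.
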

\begin{proof}
As before, we will be using the notation of Section 2.

For each $i\in\I$, since $C^*(\A_l^{(i)}, \A_r^{(i)}) \simeq \A_l^{(i)} \otimes_{min} \A_r^{(i)}$ and $\varphi_i$ is a product state we can a priori choose $\H_i = \H_{i,l} \otimes \H_{i,r}$, unit vectors $\xi_{i,l}\in \H_{i,l}, \xi_{i,r} \in \H_{i,r}$ such that $\xi_i = \xi_{i,l}\otimes \xi_{i,r}$ and $*$-homomorphisms $\pi_{i,\chi} : \A_\chi^{(i)} \rightarrow B(\H_{i,\chi})$ such that $\pi_i = \pi_{i,l}\otimes \pi_{i,r}$. This will give for $a_\chi\in \A_\chi^{(i)}$, $\chi\in\{l,r\}$, that
\begin{align*}
\varphi_i(a_la_r) & = \langle \pi_i(a_la_r)\xi_i,\xi_i\rangle 
\\ &= \langle \pi_{i,l}(a_l)\xi_{i,l},\xi_{i,l}\rangle \langle \pi_{i,r}(a_r)\xi_{i,r},\xi_{i,r}\rangle. 
\end{align*}
Along with the free product Hilbert space 
\[
(\H, \xi) = *_{i\in \I} (\H_i, \xi_{i})
\]
we need to also define, for $\chi \in \{l,r\}$, the free product Hilbert spaces
\[
(\H_\chi, \xi_\chi) = *_{i\in \I} (\H_{i,\chi}, \xi_{i,\chi}).
\]
Since there are multiple free product Hilbert spaces we will use subscripts to denote the different left and right representations, namely, 
\[
\lambda_{\H_i} : B(\H_i) \rightarrow B(\H) \quad \textrm{and} \quad  \lambda_{\H_{i,l}}: B(\H_{i,l}) \rightarrow B(\H_l)
\]
for the left representations and
\[
\rho_{\H_i}: B(\H_i) \rightarrow B(\H) \quad \textrm{and} \quad  \rho_{\H_{i,r}} : B(\H_{i,r}) \rightarrow B(\H_r)
\]
for the right representations.

Dykema's original result \cite{Dykema} proves that $\langle \cdot \xi_\chi, \xi_\chi\rangle$ is faithful on $*_{i\in \I} (\A_\chi^{(i)}, \varphi)$ for $\chi\in \{l,r\}$ and it is a folklore result that the minimal tensor product of faithful states is faithful. Thus, $\langle \cdot\: \xi_l \otimes \xi_r, \xi_l \otimes \xi_r \rangle$ is faithful on $*_{i\in \I} (\A_l^{(i)}, \varphi) \ \otimes_{min} \; *_{i\in \I} (\A_r^{(i)}, \varphi)$.

Fix $k\geq1$ and $j_1,\cdots, j_k \in \I$ such that $j_i \neq j_{i+1}, 1\leq i\leq k-1$. Now fix a unit vector 
\begin{align*}
h &= (\xi_{j_1,l} \otimes h_{j_1,r}) \otimes h_{j_2} \otimes \cdots \otimes h_{j_{k-1}} \otimes (h_{j_k, l} \otimes \xi_{j_k,r})
\\ & \in (\xi_{j_1, l} \otimes \mathring \H_{j_1, r}) \otimes \mathring \H_{j_2} \otimes \cdots \otimes \mathring \H_{j_{k-1}} \otimes (\mathring \H_{j_k, l} \otimes \xi_{j_k, r}).
\end{align*}
If $k=1$ the only possible $h$ is $\xi = \xi_{j_1} = \xi_{j_1,l} \otimes \xi_{j_1,r}$.
Call the collection of such $h$, as $k$ and the indices vary, $\S \subset \H$. 

As will be shown below, this set of unit vectors $\S$ plays an important role in decomposing simple tensors in $\H$, in particular for every simple tensor $\eta \in \H$ that is also a simple tensor in each component there exists a unique $h\in \S$ such that $\eta \in \H_l \otimes h\otimes \H_r$. By abuse of tensor notation this is not very hard to see in one's mind but the reality of proving this carefully needs plenty of indices.

To this end, for $m\geq 1$ suppose $s_1,\dots, s_m\in \I$ such that $s_t \neq s_{t+1}$ for $1\leq t\leq m-1$, and $\eta_{t, l} \in \H_{s_t,l}$, $\eta_{t,r}\in \H_{s_t, r}$ such that $\eta_{t,l}\otimes \eta_{t,r} \in \mathring \H_{s_t}$ for $1\leq t\leq m$. This last condition implies that $\eta_{t,\chi}=\|\eta_{t,\chi}\|\xi_{t,\chi}$ cannot hold for both $\chi=l$ and $\chi=r$.
In summary,
\[
\eta := (\eta_{1,l}\otimes \eta_{1,r})\otimes \cdots \otimes (\eta_{m,l}\otimes \eta_{m,r}) \in \mathring \H_{s_1} \otimes \cdots \otimes \mathring \H_{s_m}.
\]
Note that the conditions imposed on the $\eta_{t,\chi}$ in the above paragraph imply that the form of $\eta$ above is as reduced as it can be.

As mentioned above, it will be established that there exists $h\in \S$ such that 
\[
\eta  \in \H_l \otimes h \otimes \H_r.
\]
To prove the required decomposition, 
let 
\[
v = \max\{0\leq t\leq m \ : \ \eta_{j,r} = \|\eta_{j,r}\|\xi_{s_j,r}, \ 1\leq j\leq t\}
\]
and 
\[
w = \min\{1\leq t\leq m+1 \ : \ \eta_{j,l} = \|\eta_{j,l}\|\xi_{s_j,l}, \ t\leq j\leq m\}.
\]
This gives that $v$ is the number of terms in a row from the left with trivial right tensor components and $m+1 - w$ is the number of terms in a row from the right with trivial left tensor components.

By the fact that $\eta_{t,l}\otimes \eta_{t,r} \in \mathring \H_{s_t}$, that is $\eta_{t,\chi}=\|\eta_{t,\chi}\|\xi_{t,\chi}$ cannot hold for both $\chi=l$ and $\chi=r$, we have that $v<w$.
If $v=m$ then $w=m+1$ and $\eta\in \H_l$, and if $w=1$ then $v=0$ and $\eta\in \H_r$.
Otherwise, when $0\leq v\leq m-1$ and $2\leq w\leq m$, define
\begin{align*}
\eta_l &= (\eta_{1,l}\otimes \|\eta_{1,r}\|\xi_{s_1,r}) \otimes \cdots \otimes (\eta_{v,l} \otimes \|\eta_{v,r}\|\xi_{s_v,r}) \otimes (\eta_{v+1,l} \otimes \xi_{s_{v+1},r}),
\\ \eta_\S &= (\xi_{v+1,l} \otimes \eta_{v+1,r}) \otimes \cdots \otimes (\eta_{w-1,l} \otimes \xi_{w-1,r}),
\\ \eta_r &= (\xi_{w-1,l} \otimes \eta_{w-1,r}) \otimes (\|\eta_{w,l}\|\xi_{w,l} \otimes \eta_{w,r}) \otimes \cdots \otimes (\|\eta_{m,l}\|\xi_{m,l} \otimes \eta_{m,r})
\end{align*}
with $\eta_\S = \xi$ if $v+1=w$. Hence, by the usual slight abuse of the tensor notation, $\eta = \eta_l \otimes \eta_\S \otimes \eta_r \in \H_l \otimes \eta_\S \otimes \H_r$ with $\frac{1}{\|\eta_\S\|}\eta_\S \in \S$.
Therefore,
\[
\overline\spn\{ \H_l \otimes h \otimes \H_r : h\in \S\} = \H.
\]

For any $h\in\S$, which is a unit vector, there is a natural isometric map $S_h : \H_l \otimes \H_r \rightarrow \H$ given by the concatenation $\H_l \otimes \H_r \mapsto \H_l \otimes h \otimes \H_r$ with the appropriate simplification of tensors when needed. In particular, there exist $k\geq1$ and $j_1,\cdots, j_k \in \I$ such that $j_i \neq j_{i+1}, 1\leq i\leq k-1$ and then
\begin{align*}
h &= (\xi_{j_1,l} \otimes h_{j_1,r}) \otimes h_{j_2} \otimes \cdots \otimes h_{j_{k-1}} \otimes (h_{j_k, l} \otimes \xi_{j_k,r})
\\ & \in (\xi_{j_1, l} \otimes \mathring \H_{j_1, r}) \otimes \mathring \H_{j_2} \otimes \cdots \otimes \mathring \H_{j_{k-1}} \otimes (\mathring \H_{j_k, l} \otimes \xi_{j_k, r}).
\end{align*}
We can now carefully specify that the isometric map is given by
\[
\xi_l \otimes \xi_r  \mapsto h,
\]
\begin{gather*}
\xi_l \otimes (\mathring\H_{i_1,r}\otimes \cdots \otimes \mathring\H_{i_m, r}) \to 
\\   \left\{\begin{array}{ll} h \otimes (\xi_{i_1,l} \otimes \mathring\H_{i_1,r})\otimes \cdots \otimes (\xi_{i_m,l} \otimes \mathring\H_{i_m, r}),  & i_1\neq j_k
\\ (\xi_{j_1,l} \otimes h_{j_1,r}) \otimes h_{j_2} \otimes \cdots \otimes h_{j_{k-1}} \otimes (h_{j_k, l} \otimes \mathring\H_{i_1,r}) \otimes
\\ \hspace{190 pt} \cdots \otimes (\xi_{i_m,l} \otimes \mathring\H_{i_m, r}), & i_1 = j_k
\end{array}\right. 
\end{gather*}
\begin{gather*} 
(\mathring\H_{i_1,l}\otimes \cdots \otimes \mathring\H_{i_m, l}) \otimes \xi_r  \to 
\\ \left\{\begin{array}{ll} (\mathring\H_{i_1,l}\otimes \xi_{i_1, r})\otimes \cdots \otimes (\mathring\H_{i_m, l} \otimes \xi_{i_m,r}) \otimes h, & i_m\neq j_1
\\ (\mathring\H_{i_1,l}\otimes \xi_{i_1, r})\otimes \cdots \otimes (\mathring\H_{i_m, l} \otimes h_{j_1,r}) \otimes h_{j_2} \otimes 
\\ \hspace{160.5 pt} \cdots \otimes h_{j_{k-1}} \otimes (h_{j_k, l} \otimes \xi_{j_k,r}) , & i_m= j_1 \end{array}\right.
\end{gather*}
and 
\begin{gather*}
(\mathring\H_{i_1,l}\otimes \cdots \otimes \mathring\H_{i_m, l}) \otimes (\mathring\H_{t_1,r}\otimes \cdots \otimes \mathring\H_{t_s, r}) \to
\\ (\mathring\H_{i_1,l}\otimes \xi_{i_1,r}) \otimes \cdots \otimes (\mathring\H_{i_m, l} \otimes \xi_{i_m,r})\otimes h \otimes (\xi_{t_1,l}\otimes \mathring\H_{t_1,r})\otimes \cdots \otimes (\xi_{t_s, l} \otimes \mathring\H_{t_s, r})
\end{gather*}
if $i_m \neq j_1$ and $j_k \neq t_1$
with similar statements as the cases above when $i_m=j_1$ or $j_k = t_1$ or both happen.
Perhaps the most natural case of $S_h$ is when $h=\xi$. It certainly minimizes, but doesn't remove, the need for all of the cases above.

A careful examination of the $S_h$ isometric map implies that for $a\in \A_l^{(i_1)}$, $b\in \A_r^{(i_2)}$ and $\eta_\chi \in \H_\chi$ for $\chi\in\{l,r\}$ we have that, by abuse of the tensor notation,
\begin{align*}
\lambda_{\H_{i_1}}(\pi_{i_1}(a))&\rho_{\H_{i_2}}(\pi_{i_2}(b))S_h(\eta_l \otimes \eta_r) 
\\ & = \lambda_{\H_{i_1}}(\pi_{i_1}(a))\rho_{\H_{i_2}}(\pi_{i_2}(b))(\eta_l \otimes h\otimes \eta_r)
\\ & = \lambda_{\H_{i_1,l}}(\pi_{i_1,l}(a))\eta_l \otimes h \otimes \rho_{\H_{i_2,r}}(\pi_{i_2,r}(b))\eta_r
\\ & = S_h(\lambda_{\H_{i_1,l}}(\pi_{i_1,l}(a))\eta_l \otimes \rho_{\H_{i_2,r}}(\pi_{i_2,r}(b))\eta_r)
\end{align*}
Hence, $S_h(\H_l \otimes \H_r)$ is a reducing subspace of the reduced bi-free product. Moreover,
\[
S_h^*\lambda_{\H_i}\circ\pi_i(\cdot) S_h = (\lambda_{\H_{i,l}}\circ\pi_{i,l}(\cdot)) \otimes I_{\H_r} \quad \quad \textrm{on}\ \A_l^{(i)}
\]
and 
\[
S_h^*\rho_{\H_i}\circ\pi_i(\cdot) S_h = I_{\H_l}\otimes(\rho_{\H_{i,r}}\circ\pi_{i,r}(\cdot)) \quad \quad \textrm{on}\ \A_r^{(i)}
\]
Therefore, for any $h\in \S$,
\[
S_h^*({**}_{i\in \I} (\A_l^{(i)}, \A_r^{(i)}))S_h = *_{i\in \I} (\A_l^{(i)}, \varphi) \ \otimes_{min} \; *_{i\in \I} (\A_r^{(i)}, \varphi)
\]
and furthermore, by the identities involving $S_h,\lambda$ and $\rho$, $S_h^*aS_h = S_\xi^*(a)S_\xi$ for all $h\in \S$ and $a\in {**}_{i\in \I} (\A_l^{(i)}, \A_r^{(i)})$.

Finally, we want to show that compression to $S_\xi(\H_l \otimes \H_r)$ is a $*$-isomorphism. Note that this is the same as compression to $S_h(\H_l\otimes \H_r)$ being injective for any $h\in \S$. This gives us a way forward.
Suppose that $a\in {**}_{i\in \I} (\A_l^{(i)}, \A_r^{(i)})$ such that ${**}_{i\in\I} \varphi_i(a^*a) = 0$. This implies that 
\begin{align*}
0 &= a\xi
\\ & = S_\xi^*aS_\xi (\xi_l \otimes \xi_r).
\end{align*}
By the faithfulness of $\langle \cdot\: \xi_l \otimes \xi_r, \xi_l \otimes \xi_r\rangle$ this gives that $S_\xi^* a S_\xi = 0$ or rather $a$ is 0 on the reducing subspace $S_\xi(\H_l \otimes \H_r)$. But then for all $h\in \S$ we have that 
\[
S_h^*aS_h = S_\xi^*a S_\xi = 0
\]
and $a$ is 0 on the reducing subspace $S_h(\H_l \otimes \H_r)$. By what we proved about the set $\S$, we have that $a$ is 0 on 
\[
\overline\spn\{ S_h(\H_l \otimes \H_r) : h\in \S\} = \overline\spn\{ \H_l \otimes h \otimes \H_r : h\in \S\} = \H.
\]
Therefore, $a=0$ and thus ${**}_{i\in\I} \varphi_i$ is faithful.
\end{proof}

There may exist a full converse to Theorem \ref{T:mainresult} but the previous proof highly depends on the state $\varphi_i$ arising as a tensor product of states. In general, $\varphi_i$ need not be of this form. We should note here that if $\varphi_i|_{\A_l^{(i)}}$ or $\varphi_i|_{\A_l^{(i)}}$ is a pure state then $\varphi_i$ will be a tensor product of states.

To end this paper, we summarize with the following corollary.

\begin{corollary}
Let $(\A_l^{(i)}, \A_r^{(i)})_{i\in \I}$ be a non-trivial family of pairs of faces in the non-commutative C$^*$-probability space $(\A, \varphi)$. If $\varphi$ is faithful on $C^*((\A_l^{(i)}, \A_r^{(i)})_{i\in\I})$, $C^*(\A_l^{(i)}, \A_r^{(i)}) \simeq \A_l^{(i)} \otimes_{min} \A_r^{(i)}$, $\varphi_i = \varphi_i|_{\A_l^{(i)}} \otimes \varphi_i|_{\A_r^{(i)}}$ and $(\A_l^{(i)}, \A_r^{(i)})_{i\in \I}$ is bi-freely independent with respect to $\varphi$, then
\begin{align*}
C^*((\A_l^{(i)}, \A_r^{(i)})_{i\in \I}) & \simeq {**}_{i\in \I} (\A_l^{(i)}, \A_r^{(i)})_{i\in \I}
\\ & \simeq *_{i\in \I} (\A_l^{(i)}, \varphi)  \otimes_{min} *_{i\in \I} (\A_r^{(i)}, \varphi).
\end{align*}
\end{corollary}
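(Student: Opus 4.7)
The plan is to reduce the corollary to Theorem \ref{T:bifreefaithful} by extracting the needed information from the bi-free independence commutative diagram. First I would verify that the hypotheses of Theorem \ref{T:bifreefaithful} hold for $(\A_l^{(i)}, \A_r^{(i)})_{i\in\I}$: the restriction $\varphi_i = \varphi|_{C^*(\A_l^{(i)}, \A_r^{(i)})}$ is faithful, being the restriction of the faithful state $\varphi$ to a C$^*$-subalgebra, and together with the hypothesized minimal tensor decomposition and product state structure, Theorem \ref{T:bifreefaithful} yields both the faithfulness of ${**}_{i\in\I}\varphi_i$ on the reduced bi-free product and the second isomorphism
\[
{**}_{i\in \I} (\A_l^{(i)}, \A_r^{(i)})_{i\in \I} \simeq *_{i\in \I} (\A_l^{(i)}, \varphi) \otimes_{min} *_{i\in \I} (\A_r^{(i)}, \varphi).
\]

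The remaining task is to establish the first isomorphism. I would work with the two canonical surjective $*$-homomorphisms out of the full free product $\check*_{i\in\I}(\A_l^{(i)} \check * \A_r^{(i)})$ that appear in the bi-free independence diagram: namely $\iota$, which surjects onto $C^*((\A_l^{(i)}, \A_r^{(i)})_{i\in\I})$ by definition, and $\Lambda := \bigl(*_{i\in \I}(\lambda_i * \rho_i)\bigr) \circ \bigl(*_{i\in\I}(\pi_i * \pi_i)\bigr)$, which surjects onto the reduced bi-free product ${**}_{i\in\I}(\A_l^{(i)}, \A_r^{(i)})$ by construction. Bi-free independence asserts precisely that $\varphi \circ \iota = \langle \cdot\, \xi, \xi \rangle \circ \Lambda$. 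Both states on the respective targets are now faithful — $\varphi$ by hypothesis and $\langle \cdot\, \xi, \xi\rangle$ by the first paragraph — so for $x \in \ker \iota$ the equality $\langle \Lambda(x^*x)\xi, \xi\rangle = \varphi(\iota(x^*x)) = 0$ forces $\Lambda(x) = 0$, and the symmetric argument gives the reverse inclusion. Hence $\ker \iota = \ker \Lambda$, and passing to the common quotient produces the desired $*$-isomorphism $C^*((\A_l^{(i)}, \A_r^{(i)})_{i\in\I}) \simeq {**}_{i\in\I}(\A_l^{(i)}, \A_r^{(i)})$.

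I do not anticipate any real obstacle: the corollary is essentially a bookkeeping consequence of bi-free independence combined with the faithfulness of both states involved. The only minor items to verify carefully are that $\Lambda$ indeed has image equal to the reduced bi-free product — immediate from the definition of the latter as the C$^*$-algebra generated by $(\lambda_i\circ\pi_i(\A_l^{(i)}), \rho_i\circ\pi_i(\A_r^{(i)}))_{i\in\I}$ — and that $\iota$ surjects onto $C^*((\A_l^{(i)}, \A_r^{(i)})_{i\in\I})$, which is built into its definition as the extension of the identity maps on the $\A_\chi^{(i)}$.
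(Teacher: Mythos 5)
Your proposal is correct and follows essentially the same route as the paper: apply Theorem \ref{T:bifreefaithful} to obtain faithfulness of the bi-free product state and the second isomorphism, then use the commutativity of the bi-free independence diagram together with the faithfulness of both $\varphi$ and $\langle\cdot\,\xi,\xi\rangle$ to identify the kernels of the two surjections out of $\check*_{i\in\I}(\A_l^{(i)}\check*\A_r^{(i)})$, giving the first isomorphism. Your write-up is in fact slightly more explicit than the paper's about why vanishing of the state on $x^*x$ forces $\Lambda(x)=0$.
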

\begin{proof}
Recall, that by bi-free independence we know that the following diagram commutes
\[
\begin{CD}
\check*_{i\in \I} (\A_l^{(i)} \check * \A_r^{(i)}) @>\iota>> C^*((\A_l^{(i)}, \A_r^{(i)})_{i\in \I}) @>\varphi>> \bC \\
@V*_{i\in \I} (\pi_i * \pi_i)VV @. @|\\
\check *_{i\in\I} (B(\H_i) \check * B(\H_i)) @>*_{i\in I} (\lambda_{i} * \rho_{i})>> B(\H) @>\langle \cdot \xi,\xi\rangle >> \bC
\end{CD}
\]
Because both of the states are faithful on their algebras then for any $a^*a \in \check*_{i\in \I} (\A_l^{(i)} \check * \A_r^{(i)})$, $a^*a$ is in the kernel of $\iota$ if and only if $a^*a$ is in the kernel of $*_{i\in I} (\lambda_{i} * \rho_{i})\circ *_{i\in \I} (\pi_i * \pi_i)$. Therefore, both quotients are $*$-isomorphic and Theorem \ref{T:bifreefaithful} gives the final $*$-isomorphism.
\end{proof}


\end{document}